\documentclass[12pt,final]{article}
\usepackage{amsmath,amsthm,amsfonts,amssymb,graphicx,enumerate,psfrag}
\usepackage{tikz,pgfplots}
\usepackage{mathrsfs}
\usepackage{fullpage}
\usepackage{mathtools}

\usepackage[colorlinks,citecolor=blue,urlcolor=blue]{hyperref}
\definecolor{myblue}{RGB}{51,51,178}
\definecolor{myred}{RGB}{189,26,26}
\definecolor{mygreen}{RGB}{0,128,0}
\usepackage[utf8]{inputenc} 
\newtheorem{lemma}{Lemma}

\newtheorem{theorem}{Theorem}
\newtheorem{example}{Example}
\newtheorem{proposition}{Proposition}
\newtheorem{corollary}{Corollary}
\newtheorem{remark}{Remark}
\newtheorem{conjecture}{Conjecture}

\newcommand{\dN}{\mathbb {N}}

\newcommand{\cP}{\mathcal {P}}

\newcommand{\cA}{\mathcal {A}}
\newcommand{\cE}{\mathcal {E}}

\newcommand{\dR}{\mathbb {R}}
\newcommand{\CD}{\mathrm{CD}}
\newcommand{\dd}{\mathrm{d}}

\newcommand{\EE}{{\mathbb{E}}}

\newcommand{\dX}{{\mathbb{X}}}

\newcommand{\tmix}{\mathrm{t}_{\textsc{mix}}}
\newcommand{\wmix}{\mathrm{w}_{\textsc{mix}}}

\newcommand{\dtv}{\mathrm{d}_{\textsc{tv}}}

\newcommand{\Var}{{\mathrm{Var}}}

\newcommand{\supp}{\mathrm{supp}}

\DeclarePairedDelimiter{\abs}{\lvert}{\rvert}
\DeclarePairedDelimiter{\tond}{(}{)}

\DeclarePairedDelimiter{\scal}{\langle}{\rangle} 
\newcommand{\chisq}[2]{\chi^2\tond*{#1\,\middle\|\,  #2}}
\newcommand{\expo}[1]{\exp\tond*{#1}}

\newcommand{\relden}[2]{\frac{\dd#1}{\dd#2}}

\newcommand{\variwr}[2]{\mathrm{Var}_{#1}\tond*{#2}}

\title{The local product condition implies cutoff}
\author{Francesco Pedrotti and Justin Salez}
\begin{document}
	\maketitle
	\begin{abstract}
		In the theory of mixing times, a famously wrong conjecture predicts that a sequence of Markov processes exhibits cutoff as soon as the product of their Poincaré constant and mixing time diverges. We prove that this statement becomes correct once the Poincaré constant $\gamma$ is replaced with its natural non-equilibrium refinement, which we denote by $\gamma_\star$. More precisely, we show that the width of the mixing window of any Markov process is $O(1/\gamma_\star)$.  This  estimate is sharp, and universal up to standard regularity assumptions: it holds on finite and infinite state spaces  and from any initial condition, and it does not  require reversibility, nor any kind of a chain rule. In addition, for deterministic initialization we show that $\gamma_\star\ge\kappa$, where $\kappa$ is the Bakry-\'Emery curvature, making our result broadly applicable. Finally, our proof is short and self-contained: we simply follow the classical idea of replacing the total variation distance  by the more tractable $\chi^2$-divergence, but with the  crucial novelty that the reference measure  evolves in time, instead of being the equilibrium law. 
	\end{abstract}

	\section{Introduction}
	Throughout this paper, we consider a continuous-time Markov process $(X_t)_{t\ge 0}$ taking values in a measurable space $(\dX,\cE)$, with generator $L$, semigroup $(P_t)_{t\ge 0}$ and invariant law $\pi\in\cP(\dX)$. As usual, we  assume that  $\mu_t:=\mathrm{law}(X_t)$ is absolutely continuous w.r.t. $\pi$ for each $t>0$, and that   $\dtv(\mu_t,\pi)\to 0$ as $t\to\infty$, where $\dtv$ denotes the  total variation distance:
	\begin{eqnarray*}
		\dtv(\mu,\pi) & := & \sup_{A\in\cE}\,\abs*{\mu(A)-\pi(A)}.
	\end{eqnarray*} We also assume to be given an algebra $\cA \subset L^\infty(\pi)$ 
	which contains constants, is contained in the domain of $L$, is stable under  $(P_t)_{t\ge 0}$, and is dense in $L^2(\pi)$. These assumptions are trivially satisfied when $\dX$ is finite, but they hold considerably more generally \cite{MR3155209}. Given a precision $\varepsilon \in (0,1)$, we define the mixing time as
	\begin{eqnarray*}
		\tmix(\varepsilon) & := & \inf\left\{t\ge 0\colon \dtv\tond*{\mu_t, \pi}\le \varepsilon\right\}.
	\end{eqnarray*}
	
	In many concrete examples, the process $(X_t)_{t\ge 0}$ of interest actually depends on a \emph{dimension parameter} $n\in\dN$ which affects the state space, the initial condition and the dynamics, and one is interested in the \emph{high-dimensional} limit where $n\to\infty$. In this regime, several models are known to undergo a sharp transition from out-of-equilibrium to equilibrium known as a \emph{cutoff}, and defined as follows (see Figure \ref{fig:cutoff}):
	\begin{eqnarray}
		\label{def:cutoff}
		\forall\varepsilon\in(0,1),\qquad \frac{\tmix(1-\varepsilon)}{\tmix(\varepsilon)} & \xrightarrow[n\to\infty]{} & 1.
	\end{eqnarray}
	\begin{figure}
		\begin{center}
			\pgfplotsset{compat=1.16}
			\pgfplotsset{ticks=none}
			\begin{tikzpicture}
				\begin{axis}[
					width = 14.5cm,
					height = 9cm,
					axis x line=middle,
					axis y line=middle,
					xlabel = $t$,
					xlabel style ={at={(1,-0.1)}},
					ylabel style ={at={(0,1.05)}},
					ylabel = $\dtv\tond{\mu _t, \pi}$,
					clip = false,
					grid=both,
					grid style={dashed, line width=.5pt, draw=gray!10},
					xmode = normal,
					ymode = normal,
					line width = 1pt,
					legend cell align = left,
					legend style = {fill=none, at={(0.9,0.9)}, anchor = north east},
					yticklabel style={above left},
					anchor = north west,
					tickwidth={5pt},
					xtick align = outside,
					ytick align = outside,
					ymax = 1.05,
					ymin = 0,
					xmin = 0,
					xmax = 40,
					x axis line style=-,
					y axis line style=-,
					domain = 0:40,
					samples = 1000
					]
					\def\scale{3}
					\addplot[solid,  myred, line width = 1.5 pt] {
						-rad(atan(x-20))/rad(atan(20))/2 + 0.5	
					};
					%
					\draw[dashed, color= myblue] (15,0) -- (15,0.9515);
					\draw[dashed, color= myblue] (0,0.9515) -- (15,0.9515);
					\draw[dashed, color= mygreen] (25,0) -- (25,0.0484723);
					\draw[dashed, color= mygreen] (0,0.0484723) -- (25,0.0484723);	

					\draw[color= myblue] (0,0.9515) -- (-0.5,0.9515) node[color = black, anchor=east] {$\textcolor{myblue}{1-\varepsilon}$};
					\draw[color= myblue] (15,0) -- (15,-0.02) node[color=black, anchor=north] {$\textcolor{myblue}{\tmix(1-\varepsilon)}$};
					\draw[color= mygreen] (25,0) -- (25,-0.02) node[color = black, anchor=north] {${\textcolor{mygreen}{\tmix(\varepsilon)}}$};
					\draw[color= mygreen] (0,0.0484723) -- (-0.5,0.0484723) node[color = black,anchor=east] {$\textcolor{mygreen}{\varepsilon}$};
					
					\draw[color= black] (0,1.0) -- (-0.5,1.0) node[color = black,anchor=south east] {${1}$};
					
				\end{axis}
				
			\end{tikzpicture}
			\caption{A typical plot of the distance to equilibrium $t\mapsto \dtv\tond{\mu_t, \pi}$. As the ratio $\frac{\tmix(1-\varepsilon)}{\tmix(\varepsilon)}$ approaches $1$, the transition to equilibrium becomes abrupt (cutoff).}
			\label{fig:cutoff}
		\end{center}
	\end{figure}
	We refer the unfamiliar reader to the seminal works \cite{diaconis1981generating,aldous1983mixing,aldous1986shuffling,diaconis1996cutoff}, the recent paper \cite{MR4780485} and the Saint-Flour lecture notes \cite{salez2025lecture} for an introduction to this beautiful phenomenon. Despite the accumulation of hundreds of examples, a unified theory is still missing and identifying the general conditions that trigger a cutoff constitutes one of the most challenging open problems in the modern analysis of Markov processes. In particular, the following famously wrong, but extremely fertile answer was proposed by Peres at an AIM Workshop ``\emph{Sharp Thresholds for Mixing Times}'' in 2004.  
	\begin{conjecture}[The product condition implies cutoff]\label{conj:main} A cutoff occurs as soon as 
		\begin{eqnarray}
			\label{def:PC}
			\gamma\times \tmix(\varepsilon) & \xrightarrow[n\to\infty]{} & +\infty,
		\end{eqnarray}
		for some $\varepsilon\in(0,1)$, where $\gamma$ denotes the Poincaré constant of the considered process.  
	\end{conjecture}
	Let us recall that the Poincaré constant is the largest number $\gamma\ge 0$ such that
	\begin{eqnarray}
		\label{def:PI}
		\forall f\in \cA,\qquad \gamma\,\Var_\pi(f) & \le & \int \Gamma f\,\dd\pi,
	\end{eqnarray}
	where $\Gamma\colon \cA\to L^2(\pi)$ is the \emph{carré du champ} operator given by
	\begin{eqnarray*}
		\label{eq:Gamma-def}
		\Gamma f & := & \frac12 L f^2 - f Lf.
	\end{eqnarray*}
	Conjecture \ref{conj:main} has drawn considerable attention in the community of mixing times, mainly because of its effective nature: unlike   (\ref{def:cutoff}), the product condition (\ref{def:PC}) is easily checked in practice, since  it only requires rough lower bounds on $\gamma$ and $\tmix$. Moreover, it is known to be necessary for cutoff, at least in the standard setup of reversible chains on finite state spaces started from worst-case initial conditions (see, e.g., \cite{salez2025lecture}). It has also been shown to be sufficient on particular ensembles, including birth-and-death processes \cite{ding2010total}, random walks on trees \cite{MR3650406}, non-conservative exclusion dynamics \cite{MR4546624}, and non-negatively curved diffusions \cite{salez2025diffusions}. Unfortunately,  it is \emph{not} sufficient in general \cite{MR2774096}, and counter-examples are in a sense \emph{generic}. More precisely, a simple additive perturbation of the form
	\begin{eqnarray*}
		\widetilde{L}f & := & Lf+\eta\left(\int f\dd\pi-f\right),
	\end{eqnarray*}
	will, if the perturbation parameter $\eta$ is tuned appropriately, completely destroy cutoff while preserving most of the structural properties of the process, including the product condition (see again \cite{salez2025lecture}). Nevertheless, counter-examples of this form are widely considered as \emph{pathological} by the community, and the general consensus is that the product condition should correctly predict cutoff for \emph{reasonable} processes. Giving an honest mathematical meaning to this vague claim is precisely the aim of the present paper. 
	
	\section{Main result}

	The Poincaré inequality (\ref{def:PI}) admits a natural generalization, obtained by replacing the invariant law $\pi$ with any measure $\mu\in\cP(\dX)$ which is absolutely continuous w.r.t. $\pi$. Specifically, we define the Poincaré constant  of  $\mu$ as the largest constant $\gamma(\mu)\in[0,\infty]$ such that
	\begin{eqnarray}
		\label{def:PIgeneral}
		\forall f\in\cA,\qquad \gamma(\mu)\,\Var_\mu\left(f\right) & \le & \int \Gamma f\,\dd\mu.
	\end{eqnarray}
	We  then define the \emph{local Poincaré constant} of our Markov process $(X_t)_{t\ge 0}$ as follows:
	\begin{eqnarray*}
		\gamma_\star & := & \inf_{t> 0}\gamma(\mu_t).
	\end{eqnarray*}
	Equivalently, $\gamma_\star$ is the optimal constant in the \emph{local Poincaré inequality}
	\begin{eqnarray}
		\label{def:LPI}
		\forall f\in\cA,\quad \forall t> 0,\quad \gamma_\star \Var\left(f(X_t)\right) & \le & \EE\left[\Gamma f(X_t)\right].
	\end{eqnarray}
	Note that this definition strengthens the classical one, in the sense that $\gamma_\star\le \gamma$, as can be seen by sending $t\to\infty$ in the above inequality (recall that $\dtv(\mu_t,\pi)\to 0$ as $t\to\infty$). Our main result asserts that this natural non-equilibrium refinement suffices to repair Conjecture \ref{conj:main}. More quantitatively, we prove that the inverse local Poincaré constant $\gamma_\star^{-1}$ is a universal upper bound on  the \emph{width} of the mixing window, defined as follows (with $0<\varepsilon<1/2$):
	\begin{eqnarray*}
		\wmix(\varepsilon) & := & \tmix(\varepsilon)-\tmix(1-\varepsilon).
	\end{eqnarray*}
	We emphasize that our estimate holds on arbitrary state spaces and from any initial condition. Neither reversibility, nor any kind of chain rule are required.
	\begin{theorem}[The local product condition implies cutoff]\label{th:main}For any $0<\varepsilon\le 1/2$, we have
		\begin{eqnarray}
			\label{eq:window}
			\wmix(\varepsilon) & \le & \frac{3}{2\gamma_\star}\log \frac{1}{\varepsilon}.
		\end{eqnarray}
		In particular, the cutoff phenomenon occurs as soon as, for some  (hence every) $\varepsilon\in(0,1)$,
		\begin{eqnarray}
			\label{def:LPC}
			\gamma_\star \times \tmix(\varepsilon) & \xrightarrow[n\to\infty]{} & \infty.
		\end{eqnarray}
	\end{theorem}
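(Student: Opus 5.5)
The idea is to compare $\mu_{t+s}$ not with $\pi$ but with $\mu_t$ itself, using the $\chi^2$-divergence with respect to the moving reference measure $\mu_t$. Set $s:=\wmix$-type time shift and $t:=\tmix(1-\varepsilon)$. We want to show that $\dtv(\mu_{t+s},\pi)\le\varepsilon$ once $s\ge \frac{3}{2\gamma_\star}\log\frac1\varepsilon$, i.e.\ that the distance drops from $1-\varepsilon$ to $\varepsilon$ within time $s$.

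\textbf{Step 1 (variance decay along the flow).} Fix $t>0$ and $f\in\cA$. Consider $u\mapsto \int (P_{s-u}f)^2\,\dd\mu_{t+u}-\big(\int P_{s-u}f\,\dd\mu_{t+u}\big)^2$ for $u\in[0,s]$. The mean term is constant, equal to $\int f\,\dd\mu_{t+s}$. Differentiating the first term using $\partial_u\mu_{t+u}=\mu_{t+u}L$ and $\partial_u P_{s-u}f=-LP_{s-u}f$ gives $\int \big(L g^2-2gLg\big)\dd\mu_{t+u}=2\int\Gamma g\,\dd\mu_{t+u}$ with $g=P_{s-u}f$. Thus $\Var_{\mu_{t+u}}(P_{s-u}f)$ is nondecreasing in $u$, with derivative $2\int\Gamma g\,\dd\mu_{t+u}\ge 2\gamma_\star\Var_{\mu_{t+u}}(P_{s-u}f)$. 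Grönwall's inequality then yields
\[
\Var_{\mu_t}(P_sf)\le e^{-2\gamma_\star s}\,\Var_{\mu_{t+s}}(f).
\]
This is the reverse-type local Poincaré decay. Stability of $\cA$ under $P_t$ makes the differentiation legitimate.

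\textbf{Step 2 (from variance to total variation).} By duality, $\mu_{t+s}(f)-\mu_t(f)=\mu_t(P_sf-f)$; that formulation is less useful, however. Instead I would test against an event. Take $A$ and set $f=1_A$, approximated in $\cA$. Then $\mu_{t+s}(A)-\pi(A)=\mu_t(P_sf)-\pi(P_sf)$. I would bound $|\mu_t(P_sf)-\pi(P_sf)|$ using $P_sf$ being nearly constant under $\mu_t$: write $c=\mu_t(P_sf)$. The set $B=\{|P_sf-c|>\delta\}$ has $\mu_t(B)\le e^{-2\gamma_\star s}\Var_{\mu_{t+s}}(f)/\delta^2\le e^{-2\gamma_\star s}/(4\delta^2)$ by Chebyshev. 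Since $\dtv(\mu_t,\pi)\le 1-\varepsilon$, we get $\pi(B)\le \mu_t(B)+1-\varepsilon$. Using $0\le P_sf\le1$ and $\pi(P_sf)=\pi(A)$, we obtain $|\pi(A)-c|\le\delta+\pi(B)$. This bound is weak because $\pi(B)$ can be close to $1-\varepsilon$.

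\textbf{Step 3 (the main obstacle).} The previous step loses everything, so the real work is choosing the right functional. I would instead apply Step 1 directly to $\chi^2$-divergence: take $h=\dd\mu_{t+s}/\dd\mu_t$-type densities, or equivalently test with $f=\relden{\pi}{\mu_{t+s}}$ truncated. Concretely, I expect the intended route to be showing
\[
\chisq{\mu_{t+s}}{\mu_t}\ \text{or}\ \chisq{\pi}{\mu_{t+s}}\le e^{-2\gamma_\star s}\,\chisq{\pi}{\mu_t},
\]
with the latter obtained from Step 1 by duality, since $\pi=\pi P_s$. Because $\chisq{\pi}{\mu_t}$ is not controlled by $\dtv(\mu_t,\pi)\le1-\varepsilon$ alone, I would first restrict $\pi$ to a good set. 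Let $\nu:=\pi(\cdot\,|\,\dd\pi/\dd\mu_t\le K)$, for which $\chisq{\nu}{\mu_t}\lesssim K/\varepsilon$ since $\pi(\dd\pi/\dd\mu_t\le K)\gtrsim\varepsilon$. Evolving $\nu$ gives $\nu P_s$, which is $O(\varepsilon)$-close to $\pi$ in a suitable sense because $\nu\ge$ (fraction) $\pi$ on a set of mass of order $\varepsilon$. Then Cauchy–Schwarz gives $\dtv\le\frac12\sqrt{\chi^2}$. Balancing $K$ and the losses should produce the factor $\varepsilon^{-3}$ inside $e^{2\gamma_\star s}$, i.e.\ $s=\frac{3}{2\gamma_\star}\log\frac1\varepsilon$. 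Getting this bookkeeping to close with constant exactly $3/2$ is where I expect difficulty.

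The cutoff statement then follows immediately. We have $\wmix(\varepsilon)/\tmix(\varepsilon)\le \frac{3\log(1/\varepsilon)}{2\gamma_\star\tmix(\varepsilon)}\to0$. The window bound for all $\varepsilon$ transfers divergence of $\gamma_\star\tmix(\varepsilon)$ between different $\varepsilon$.
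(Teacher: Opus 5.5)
\textbf{There is a genuine gap: the argument is never closed, and the sketched bookkeeping does not give the constant $\tfrac32$.}

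Your Step 1 is correct. It is exactly the Gr\"onwall estimate \eqref{eq:1} in the proof of Lemma~\ref{lm:chi}, specialised to $\mu=\mu_t$. The duality step you allude to (test against $f=\relden{(\nu P_s)}{\mu_{t+s}}$ after extending from $\cA$ to $L^2(\mu_{t+s})$, then Cauchy--Schwarz) gives $\chisq{\nu P_s}{\mu_{t+s}}\le e^{-2\gamma_\star s}\chisq{\nu}{\mu_t}$ for \emph{every} $\nu\ll\mu_t$, not only for $\nu=\pi$. Restricting $\pi$ to a good set is also the right instinct.

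The sentence meant to close the proof is not a valid argument. $\nu P_s$ is not close to $\pi$ ``because $\nu$ dominates a fraction of $\pi$ on a set of mass of order $\varepsilon$''. What is needed is a \emph{second} $\chi^2$ contraction, this time towards equilibrium: $\chisq{\nu P_s}{\pi}\le e^{-2\gamma s}\chisq{\nu}{\pi}\le e^{-2\gamma_\star s}\chisq{\nu}{\pi}$. This is Lemma~\ref{lm:chi} with reference measure $\pi$, using $\gamma_\star\le\gamma$. It must then be combined with the triangle inequality $\dtv(\mu_{t+s},\pi)\le\dtv(\mu_{t+s},\nu P_s)+\dtv(\nu P_s,\pi)$ and with \eqref{CS} on each term. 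So the intermediate measure $\nu$ needs controlled $\chi^2$-divergence with respect to \emph{both} $\mu_t$ and $\pi$, and the whole point is to choose it well.

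Your one-sided truncation $\nu=\pi(\cdot\mid A)$, with $A=\{\dd\pi/\dd\mu_t\le K\}$, does not reach the stated constant with the bookkeeping you indicate. You only know $\pi(A)\ge\varepsilon-1/K$ and $\|\dd\nu/\dd\mu_t\|_\infty\le K/\pi(A)$. The best choice $K=2/\varepsilon$ gives $\chisq{\nu}{\mu_t}\le 4/\varepsilon^2$ and $\chisq{\nu}{\pi}\le 2/\varepsilon$. Hence $\dtv(\mu_{t+s},\pi)\le 2e^{-\gamma_\star s}/\varepsilon$, and the window is $\frac{2}{\gamma_\star}\log\frac1\varepsilon+\frac{\log 2}{\gamma_\star}$. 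That is enough for cutoff but not for \eqref{eq:window}; the factor $\varepsilon^{-3}$ you hope for does not come out.

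The missing idea is the symmetric interpolation of Lemma~\ref{lm:interp}. Take $\nu$ with density $\frac{\mu_t\wedge\pi}{1-\dtv(\mu_t,\pi)}$. Its Radon--Nikodym derivatives with respect to $\mu_t$ and to $\pi$ are \emph{both} bounded by $1/(1-\dtv(\mu_t,\pi))\le 1/\varepsilon$, so both $\chi^2$-divergences are at most $1/\varepsilon$. The two contractions plus the triangle inequality then give $\dtv(\mu_{t+s},\pi)\le e^{-\gamma_\star s}/\sqrt{\varepsilon}$, which is at most $\varepsilon$ exactly when $s\ge\frac{3}{2\gamma_\star}\log\frac1\varepsilon$.

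Also take $t=\tmix(1-\varepsilon)+h$ with $h>0$, and let $h\to0$ at the end. This guarantees $t>0$, so that $\mu_t\ll\pi$ and $\gamma_\star$ applies, and that $\dtv(\mu_t,\pi)\le 1-\varepsilon$.

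Your deduction of cutoff from the window bound, including the transfer between different $\varepsilon$, is fine.
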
 Recall that Conjecture \ref{conj:main} is widely believed to correctly predict cutoff for all  \emph{reasonable} processes, in a sense that has long resisted any rigorous formalization. In light of Theorem \ref{th:main}, we see that any model along which the ratio $\gamma/\gamma_\star$ is bounded independently of the dimension parameter $n$ may safely be deemed \emph{reasonable}. An emblematic example is the following fundamental toy model, which also shows that our window estimate (\ref{eq:window}) is sharp. 
	\begin{example}[Hypercube]\label{ex:cube}Let $(X_t)_{t\ge 0}$ be the  random walk on  $\dX=\{0,1\}^n$  with  generator 
		\begin{eqnarray*}
			Lf(x) & := & \sum_{i=1}^n\left(f(x^i)-f(x)\right),
		\end{eqnarray*}
		where $x\mapsto x^i$ is the map that flips the $i$-th coordinate. The associated carré du champ  is 
		\begin{eqnarray*}
			\Gamma f(x) & := & \frac{1}{2}\sum_{i=1}^n\left(f(x^i)-f(x)\right)^2.
		\end{eqnarray*}
		Now, if $X_0$ has a product law (which is the case when $X_0$ is non-random), then so does  $X_t$ for any time $t\ge 0$. Consequently,  the Efron-Stein inequality \cite{MR3185193} guarantees that 
		\begin{eqnarray*}
			\Var\left(f(X_t)\right) & \le &  \frac{1}{2}\EE\left[\Gamma f(X_t)\right],
		\end{eqnarray*}
		for all $f\in\dR^\dX$. In other words, we have $\gamma_\star\ge 2$, and since $\gamma_\star\le \gamma=2$, we conclude that
		\begin{eqnarray*}
			\gamma_\star & = & \gamma \ = \ 2.
		\end{eqnarray*}
		Thus, in this case, the local product condition (\ref{def:LPC}) is exactly  the classical product condition, and we immediately deduce cutoff with a window $\wmix(\varepsilon)=O(1)$, which is  sharp. 
	\end{example}
	To become effective, Theorem \ref{th:main} needs to be complemented with a systematic method for estimating the local Poincaré constant $\gamma_\star$. To this end, we exploit an idea that lies at the heart of the Bakry-\'Emery theory \cite{MR889476,MR3155209}, namely, 
	a sub-commutation relation between the carré du champ and the  semigroup.  Specifically, for each $t\ge 0$, we let $\psi(t)\in[0,\infty]$ denote the optimal constant in the following comparison of quadratic forms:
	\begin{eqnarray}
		\label{def:psi}
		\forall f\in\cA,\qquad \Gamma P_t f & \le & \psi(t)\,P_t \Gamma f.
	\end{eqnarray}
	The interest of this definition is summarized in the following result.

	\begin{proposition}[Controlling the local Poincaré constant]\label{pr:psi}
		If $\mu_0 \ll \pi$, then
		\begin{eqnarray}\label{eq:local-pi-estimate}
			\forall t>0,\quad \frac{1}{\gamma(\mu_t)} & \le & \frac{\psi(t)}{\gamma(\mu_0)}+2\int_0^t\psi(s)\dd s.
		\end{eqnarray} 
		If instead of $\mu_0\ll\pi$ we assume that 
		\begin{eqnarray}\label{eq:ass-det-init}
			\forall f \in \cA, \quad \variwr{\mu_t}{f} & \xrightarrow[t\to 0^+]{} & 0. 
		\end{eqnarray}
		then 
		\begin{eqnarray}\label{eq:local-pi-estimate-deterministic}
			\forall t>0 ,\quad \frac{1}{\gamma(\mu_t)} & \le & 2\int_0^t\psi(s)\dd s.
		\end{eqnarray} 
		
	\end{proposition}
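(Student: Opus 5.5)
We plan to use the classical semigroup interpolation for variances, with the law $\mu_0$ of the initial condition in place of the invariant measure. Fix $t>0$ and $f\in\cA$. Since $\mu_t=\mu_0P_t$, we have $\Var_{\mu_t}(f)=\int P_t(f^2)\,\dd\mu_0-(\int P_tf\,\dd\mu_0)^2$. We introduce the interpolation
\begin{eqnarray*}
\Phi(s) & := & \int P_s\big((P_{t-s}f)^2\big)\,\dd\mu_0 \;=\; \int (P_{t-s}f)^2\,\dd\mu_s,\qquad s\in[0,t].
\end{eqnarray*}
Then $\Phi(t)=\int f^2\dd\mu_t$, and
\begin{eqnarray*}
\Phi(0) & = & \int (P_tf)^2\dd\mu_0 \;=\; \Var_{\mu_0}(P_tf)+\Big(\int f\dd\mu_t\Big)^2 .
\end{eqnarray*}
Hence $\Var_{\mu_t}(f)=\Var_{\mu_0}(P_tf)+\Phi(t)-\Phi(0)$.

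The derivative of $\Phi$ is computed in the usual way. Write $g=P_{t-s}f$, so that $\partial_s g=-Lg$. Then
\begin{eqnarray*}
\Phi'(s) & = & \int P_s\big(L g^2-2gLg\big)\dd\mu_0 \;=\; 2\int P_s\Gamma(P_{t-s}f)\,\dd\mu_0 .
\end{eqnarray*}
The stability of $\cA$ under the semigroup makes this legitimate: $g\in\cA$, so $g^2\in\cA\subset\mathrm{Dom}(L)$. Differentiation in $s$ can be justified either by the standard semigroup calculus on $\cA$, or by working at the level of $\int\cdot\,\dd\mu_0$ with $\mu_0$ replaced by $\mu_\delta$ for small $\delta$. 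Now apply (\ref{def:psi}) with $t-s$ in place of $t$, namely $\Gamma P_{t-s}f\le\psi(t-s)P_{t-s}\Gamma f$. Since $P_s$ is positivity preserving and $\mu_0P_sP_{t-s}=\mu_t$, this gives
\begin{eqnarray*}
\Phi(t)-\Phi(0) & \le & 2\int_0^t\psi(t-s)\,\dd s\int\Gamma f\,\dd\mu_t \;=\; 2\int_0^t\psi(s)\dd s\int\Gamma f\dd\mu_t .
\end{eqnarray*}

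For the first term, when $\mu_0\ll\pi$ we have $P_tf\in\cA$. Applying (\ref{def:PIgeneral}) to $\mu_0$ and then (\ref{def:psi}) at time $t$ gives
\begin{eqnarray*}
\Var_{\mu_0}(P_tf) & \le & \frac{1}{\gamma(\mu_0)}\int\Gamma P_tf\,\dd\mu_0 \;\le\; \frac{\psi(t)}{\gamma(\mu_0)}\int P_t\Gamma f\,\dd\mu_0 \;=\; \frac{\psi(t)}{\gamma(\mu_0)}\int\Gamma f\dd\mu_t .
\end{eqnarray*}
Summing the two bounds and optimizing over $f$ yields (\ref{eq:local-pi-estimate}).

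Under assumption (\ref{eq:ass-det-init}), $\mu_0$ may be singular, so we run the same argument started at a small time $\delta>0$ instead of $0$. The interpolation on $[\delta,t]$ gives
\begin{eqnarray*}
\Var_{\mu_t}(f) & \le & \Var_{\mu_\delta}(P_{t-\delta}f)+2\int_0^{t-\delta}\psi(s)\dd s\int\Gamma f\dd\mu_t .
\end{eqnarray*}
The main obstacle is to show that $\Var_{\mu_\delta}(P_{t-\delta}f)\to0$ as $\delta\to0^+$, because (\ref{eq:ass-det-init}) concerns a fixed function and not the moving one $P_{t-\delta}f$. We handle this by comparing with $P_tf\in\cA$. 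The standard deviation is a seminorm, so
\begin{eqnarray*}
\sqrt{\Var_{\mu_\delta}(P_{t-\delta}f)} & \le & \sqrt{\Var_{\mu_\delta}(P_tf)}+\|P_{t-\delta}f-P_tf\|_{L^2(\mu_\delta)} .
\end{eqnarray*}
The first term vanishes by (\ref{eq:ass-det-init}). For the second, Jensen's inequality gives
\begin{eqnarray*}
\int(P_{t-\delta}f-P_tf)^2\dd\mu_\delta & \le & \int P_{t-\delta}\big((f-P_\delta f)^2\big)\dd\mu_\delta \;=\; \int(f-P_\delta f)^2\dd\mu_t .
\end{eqnarray*}
This tends to $0$ because $\|f-P_\delta f\|_\infty\le\delta\|Lf\|_\infty$ when $Lf$ is bounded. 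If only $Lf\in L^2(\pi)$, we can instead use strong continuity in $L^2(\pi)$ after assuming a bounded density at time $t$, or use an approximation argument.

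Letting $\delta\to0$, and using monotone convergence for $\int_0^{t-\delta}\psi$, gives (\ref{eq:local-pi-estimate-deterministic}).
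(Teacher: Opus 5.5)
Your proof follows the paper's argument essentially line by line. Your $\Phi(t)-\Phi(0)$ is the integral identity $P_tf^2-(P_tf)^2=2\int_0^tP_s\Gamma P_{t-s}f\,\dd s$ integrated against $\mu_0$, combined with the same law-of-total-variance split and the same use of $\gamma(\mu_0)$ and $\psi(t)$. In the singular case you make the same restart at a small time, the same comparison with $P_tf\in\cA$, and the same Jensen reduction to $\int(f-P_\delta f)^2\,\dd\mu_t$. Using the triangle inequality for the standard deviation instead of the paper's $\Var(a+b)\le2\Var(a)+2\Var(b)$ makes no difference.

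The one step you leave open is the final limit $\int(f-P_\delta f)^2\,\dd\mu_t\to0$. Your main justification needs $Lf\in L^\infty(\pi)$, but the setup only gives $\cA\subset\mathrm{Dom}(L)$. A bounded density $\dd\mu_t/\dd\pi$ is not available either, and ``an approximation argument'' is not spelled out. So as written, this step is not proved in the stated generality.

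Boundedness of $f$ itself suffices. Since $P_\delta$ is Markov, $(f-P_\delta f)^2\le4\|f\|_\infty^2$ holds $\pi$-a.e., hence $\mu_t$-a.e. Moreover $P_\delta f\to f$ in $L^2(\pi)$, since $\|P_\delta f-f\|_{L^2(\pi)}\le\delta\|Lf\|_{L^2(\pi)}$. This gives convergence in $\pi$-measure, hence in $\mu_t$-measure because $\mu_t\ll\pi$, and bounded convergence concludes. Equivalently, with $h=\dd\mu_t/\dd\pi$ and any $M>0$, the integral is at most $M\|f-P_\delta f\|_{L^2(\pi)}^2+4\|f\|_\infty^2\int(h-M)_+\,\dd\pi$; let $\delta\to0$ and then $M\to\infty$. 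With this added, your proof is complete.
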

	The assumption \eqref{eq:ass-det-init} morally corresponds to the situation where $\mu_0$ is a Dirac mass,  in which case one can set  $\gamma(\mu_0) := \infty$, so that  \eqref{eq:local-pi-estimate-deterministic} agrees with \eqref{eq:local-pi-estimate}. Note, however, that in our general setup,  functions in $\cA$ are a priori only defined up to a $\pi$-negligible set, while Dirac masses are typically not absolutely continuous w.r.t.  $\pi$, hence the  abstract condition \eqref{eq:ass-det-init}.
	
	By the semigroup property, we automatically have $\psi(0)=1$ and $\psi(t+s) \le \psi(t)\psi(s)$ for all $t,s\ge 0$. Consequently, an exponential estimate of the form 
	\begin{eqnarray}
		\label{def:kappa}
		\psi(t) & \le & e^{-2\kappa t},
	\end{eqnarray}
	will hold for all $t\ge 0$ as soon as it holds in an infinitesimal neighborhood of $t=0$, and an easy differentiation leads to the celebrated $\CD(\kappa,\infty)$ condition introduced in \cite{MR889476}:
	\begin{eqnarray}
		\label{CD}
		\forall f\in\cA,\qquad \Gamma_2 f & \ge &  \kappa\, \Gamma f,
	\end{eqnarray}
	where $\Gamma_2 f:=\frac{1}{2}L\Gamma f-\Gamma(f,Lf)$ is the iterated carré du champ operator. The optimal constant $\kappa\in\dR$ such that (\ref{CD}) holds  is known as the \emph{Bakry-\'Emery curvature} of the process, and its local nature allows for explicit estimates on various concrete models, both in continuous and discrete spaces \cite{MR3155209,MR4871708,MR3492631}. Its interest for us lies in the following result, which is an immediate consequence of Theorem \ref{th:main} and Proposition \ref{pr:psi}. 
	
	\begin{corollary}[Curvature and mixing window]\label{co:kappa} If \eqref{eq:ass-det-init} holds, then
		\begin{eqnarray*}
			\gamma_\star & \ge & \kappa,
		\end{eqnarray*}
		where $\kappa$ is the Bakry-\'Emery curvature. In particular, when $\kappa>0$, we have
		\begin{eqnarray}
			\label{wmixkappa}
			\forall \varepsilon\in(0,1/2),\quad \wmix(\varepsilon) & \le & \frac{3}{2\kappa}\log\frac{1}{\varepsilon}.
		\end{eqnarray}
	\end{corollary}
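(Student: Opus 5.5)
The argument combines the deterministic-initialisation bound (\ref{eq:local-pi-estimate-deterministic}) of Proposition \ref{pr:psi} with the exponential control (\ref{def:kappa}) of $\psi$ under $\CD(\kappa,\infty)$, and then feeds the resulting bound on $\gamma_\star$ into Theorem \ref{th:main}.

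\textbf{Step 1: $\psi$ is controlled by the curvature.} Under (\ref{CD}) I first establish $\psi(t)\le e^{-2\kappa t}$ for all $t\ge 0$. Fix $f\in\cA$ and $t>0$, and set
\[
\Lambda(s):=P_s\bigl(\Gamma P_{t-s}f\bigr), \qquad s\in[0,t].
\]
This is legitimate because $\cA$ is stable under the semigroup and contained in the domain of $L$. Differentiating gives
\[
\Lambda'(s)=2P_s\bigl(\Gamma_2 P_{t-s}f\bigr)\ \ge\ 2\kappa\,\Lambda(s).
\]
Consequently $e^{-2\kappa s}\Lambda(s)$ is nondecreasing. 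Comparing its values at $s=0$ and $s=t$ yields
\[
\Gamma P_t f\le e^{-2\kappa t}P_t\Gamma f,
\]
which is exactly $\psi(t)\le e^{-2\kappa t}$. The paper already presents this as the standard equivalence between (\ref{def:kappa}) and (\ref{CD}), so I will only recall it briefly.

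\textbf{Step 2: bound $\gamma(\mu_t)$ for each $t>0$.} Since (\ref{eq:ass-det-init}) holds, (\ref{eq:local-pi-estimate-deterministic}) applies and gives
\[
\frac{1}{\gamma(\mu_t)}\le 2\int_0^t e^{-2\kappa s}\,\dd s=\frac{1-e^{-2\kappa t}}{\kappa}.
\]
The equality holds for every real $\kappa\neq0$, and the integral equals $2t$ when $\kappa=0$. When $\kappa>0$, the right-hand side is strictly smaller than $1/\kappa$, so $\gamma(\mu_t)\ge\kappa$ for every $t>0$. Taking the infimum over $t$ gives $\gamma_\star\ge\kappa$. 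When $\kappa\le 0$, the claim $\gamma_\star\ge\kappa$ is trivial because $\gamma_\star\ge 0$ by definition.

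\textbf{Step 3: the window bound.} For $\kappa>0$, inserting $\gamma_\star\ge\kappa$ into (\ref{eq:window}) of Theorem \ref{th:main} gives (\ref{wmixkappa}) for every $\varepsilon\in(0,1/2)$.

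The only point needing care is the differentiation in Step 1: one must justify moving $L$ under $P_s$ and the product rule for $\Gamma$ along the flow. This is standard under the stated algebra assumptions. Everything else is immediate.
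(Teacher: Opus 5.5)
Your proposal is correct and follows the route the paper intends when it calls the corollary an immediate consequence of Theorem \ref{th:main} and Proposition \ref{pr:psi}. $\CD(\kappa,\infty)$ gives $\psi(t)\le e^{-2\kappa t}$; inserting this into \eqref{eq:local-pi-estimate-deterministic} gives $1/\gamma(\mu_t)\le (1-e^{-2\kappa t})/\kappa\le 1/\kappa$ (the case $\kappa\le 0$ being trivial since $\gamma_\star\ge 0$); and \eqref{eq:window} then yields \eqref{wmixkappa}. Your Step 1 just spells out the standard Bakry--\'Emery interpolation argument for $\CD(\kappa,\infty)\Rightarrow\psi(t)\le e^{-2\kappa t}$, which the paper only alludes to.
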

	A version of this curvature-based window estimate was recently established  by the second author in the very special case of diffusions \cite[Theorem 2]{salez2025diffusions}, with a worse polynomial dependence on $\frac1\varepsilon$ rather than the optimal logarithmic one of Theorem \ref{th:main}. The proof relied on a particular differential relation between varentropy and entropy, that crucially exploits the  chain rule. The latter was subsequently adapted to discrete spaces by the two authors \cite{ped-sal-2025}, but at an extra price which diverges logarithmically with the inverse  of the smallest transition probability.  In contrast, the present work does not use the chain rule at all, allowing us to get rid of this annoying  dimensional dependence. The improvement is already visible on our emblematic toy model (Example \ref{ex:cube}), where the window estimate obtained from the varentropy approach \cite{ped-sal-2025} is diverging instead of being constant. More generally, Corollary \ref{co:kappa} implies $\wmix=O(1)$ for MCMC samplers of high-temperature Ising and low-fugacity Hardcore models on bounded degree graphs,
	thanks to the curvature estimate in \cite{MR4871708}. Our approach is elementary, and unifies the discrete and continuous settings. Before presenting it, we describe a few easy refinements of the above results. 
	
	\begin{remark}[Beyond positive curvature] While sufficient to yield cutoff in many interesting models, the estimate  $\gamma_\star\ge \kappa$ becomes useless in negative curvature. In such situations, estimating the sub-commutation function  $\psi(t)$ using its infinitesimal behavior at $t=0^+$ as in (\ref{def:kappa}) is rather pessimistic, and it might be very beneficial to look instead for relaxed versions of the $\CD(\kappa,\infty)$ criterion of the form
		\begin{eqnarray}
			\label{def:CDrelaxed}
			\forall t\ge 0,\qquad \psi(t) & \le & Ce^{-2\kappa t},
		\end{eqnarray}
		where the poor behavior of $\psi$ on short time-scales is absorbed into a prefactor $C>1$ so as to ensure a positive decay rate $\kappa>0$ at larger times. Note that by Proposition \ref{pr:psi}, this implies
		\begin{eqnarray*}
			\gamma_\star & \ge & \frac{\kappa}{C},
		\end{eqnarray*}
		which is still good enough to ensure cutoff in models where the original  curvature $-\frac12\psi'(0)$ is non-positive. This promising direction  will be explored in a subsequent work. 
	\end{remark}
	
	\begin{remark}[Restricted local Poincaré constant]A careful inspection of our proof shows that the local Poincaré constant $\gamma_\star$ appearing in the main estimate (\ref{eq:window}) can be replaced with
		\begin{eqnarray*}
			\gamma_{\varepsilon} & := & \gamma\wedge \inf_{t\in[\tmix(1-\varepsilon),\tmix(\varepsilon)]}\gamma(\mu_t),
		\end{eqnarray*}
		which is clearly always at least as large as $\gamma_\star$, and which could potentially be much larger. We also intend to explore this in a subsequent work. 
	\end{remark}

	\begin{remark}[Worst-case initialization]When $\dX$ is finite, it is standard to consider
		\begin{eqnarray*}
			\tmix^{(\star)}(\varepsilon) & := & \max_{x\in\dX}\tmix^{(x)}(\varepsilon),
		\end{eqnarray*} 
		where the notation $\tmix^{(x)}$ indicates that the initial condition is $X_0=x$.  Since our curvature-based window estimate (\ref{wmixkappa}) is uniform in the initial state $x\in\dX$, it also implies
		\begin{eqnarray}
			\label{wmixkappa-worst}
			\forall \varepsilon\in(0,1/2),\quad \wmix^{(\star)}(\varepsilon) & \le & \frac{3}{2\kappa}\log\frac{1}{\varepsilon},
		\end{eqnarray}
		where $\wmix^{(\star)}(\varepsilon):=\tmix^{(\star)}(\varepsilon)-\tmix^{(\star)}(1-\varepsilon)$.
	\end{remark}
	\section{Proofs}
	
	Our proof of   Theorem \ref{th:main} relies on the simple and classical idea of replacing the total variation distance  by the more tractable $\chi^2$-divergence, but with the  crucial novelty that our reference measure  is \emph{not} the equilibrium law. Specifically, for any  $\mu,\nu\in\cP(\dX)$, we define 
	\begin{eqnarray*}
		\chisq{\nu}{\mu} & := & 	\Var_\mu\left(\frac{\dd \nu}{\dd \mu}\right) \ = \ \int\left(\frac{\dd \nu}{\dd \mu}-1\right)^2\,\dd \mu,
	\end{eqnarray*}
	provided $\nu\ll\mu$, and $\chisq{\nu}{\mu}:=+\infty$ otherwise. Note that, by the Cauchy-Schwarz inequality, for $\nu\ll \pi$ we have the alternative variational formulation
	\begin{eqnarray}
		\label{variational}
		\chisq{\nu}{\mu} & = &  \sup_{\substack{f\in L^2(\mu)\\ \variwr{\mu }{f}=1}} {\tond*{\int  f \dd\nu - \int f\dd\mu}^2}.
	\end{eqnarray}
	On the other hand, by the Cauchy-Schwarz inequality again, we have
	\begin{eqnarray}
		\label{CS}
		{\chisq{\nu}{\mu}} & \ge & \left(\int\left|\frac{\dd \nu}{\dd \mu}-1\right|\,\dd \mu\right)^2 \ = \ 4\dtv^2(\nu,\mu),
	\end{eqnarray}
	so in order to control  the total variation distance, it is enough to control the $\chi^2$-divergence. The latter has the key advantage of  decaying exponentially fast along the semigroup, at a rate which is exactly the generalized Poincaré constant defined at (\ref{def:LPI}). This is the content of the following lemma. The special case where $\mu=\pi$ is classical and widely used, but the non-equilibrium extension given here  turns out to be crucial for our proof. Interestingly, a version of this lemma also holds for general $\phi$-divergences \cite[Theorem 8.3.1]{che-2026-book}, but only under the additional assumption that the semigroup satisfies the diffusion chain rule. Since cutoff for this subclass was already investigated in \cite{salez2025diffusions}, and the aim of this paper is to consider arbitrary Markov semigroups, it is once again crucial for us to work with the $\chi^2$-divergence. 
	
	\begin{lemma}[Exponential decay]\label{lm:chi}Fix $\mu,\nu\in\cP(\dX)$, and suppose that $\nu\ll\mu\ll \pi$. Then,
		\begin{eqnarray*}
			\forall t\ge 0,\qquad \chisq{\nu P_t}{\mu P_t}  & \le  & \chisq{\nu}{\mu} \expo{-2\int_0^t\gamma(\mu P_s)\,\dd s}.
		\end{eqnarray*}
	\end{lemma}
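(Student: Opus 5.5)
The plan is to differentiate $t\mapsto \chisq{\nu P_t}{\mu P_t}$ and show that
\[
\frac{\dd}{\dd t}\chisq{\nu P_t}{\mu P_t}\le -2\gamma(\mu P_t)\,\chisq{\nu P_t}{\mu P_t};
\]
Gr\"onwall's lemma then gives the claim.

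\emph{Step 1: express the density ratio.} Write $\mu_t:=\mu P_t$, $\nu_t:=\nu P_t$, $a_t:=\dd\mu_t/\dd\pi$, $b_t:=\dd\nu_t/\dd\pi$ and $h_t:=b_t/a_t=\dd\nu_t/\dd\mu_t$. Then $\chisq{\nu_t}{\mu_t}=\int h_t^2 a_t\,\dd\pi-1$. The densities evolve under the dual semigroup: $\partial_t a_t=L^*a_t$ and $\partial_t b_t=L^*b_t$, where $L^*$ is the $L^2(\pi)$-adjoint.

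\emph{Step 2: differentiate.} Formally,
\[
\frac{\dd}{\dd t}\int \frac{b_t^2}{a_t}\,\dd\pi=\int\Big(2h_t\,L^*b_t-h_t^2\,L^*a_t\Big)\dd\pi=\int\Big(2\,b_t\,Lh_t-a_t\,L(h_t^2)\Big)\dd\pi .
\]
Since $b_t=h_ta_t$, this equals
\[
\int a_t\big(2h_tLh_t-L h_t^2\big)\,\dd\pi=-2\int\Gamma h_t\,\dd\mu_t .
\]
This is the non-equilibrium analogue of the classical identity $\frac{\dd}{\dd t}\Var_\pi(P_tf)=-2\int\Gamma P_tf\,\dd\pi$. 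It uses no chain rule, only the bilinearity built into $\Gamma$.

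\emph{Step 3: apply the Poincaré inequality for $\mu_t$.} By \eqref{def:PIgeneral},
\[
\int \Gamma h_t\,\dd\mu_t\ge \gamma(\mu_t)\Var_{\mu_t}(h_t)=\gamma(\mu_t)\,\chisq{\nu_t}{\mu_t}.
\]
Substituting into Step 2 and integrating the resulting differential inequality gives the lemma.

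\emph{Main obstacle: rigor in a general state space.} Three issues arise: $h_t$ need not lie in $\cA$, $L^*$ may not act nicely on densities, and the $\chi^2$ divergence may be infinite or non-differentiable. To avoid differentiating densities, I would work with the variational formula \eqref{variational}. For fixed $T$ and $f\in\cA$, consider
\[
F(s):=\frac{\big(\int P_{T-s}f\,\dd\nu_s-\int P_{T-s}f\,\dd\mu_s\big)^2}{\Var_{\mu_s}(P_{T-s}f)}.
\]
The numerator is constant in $s$, equal to $(\nu_T(f)-\mu_T(f))^2$. For the denominator, with $g_s:=P_{T-s}f\in\cA$, a direct computation gives
\[
\frac{\dd}{\dd s}\Var_{\mu_s}(g_s)=2\int\Gamma g_s\,\dd\mu_s\ge 2\gamma(\mu_s)\Var_{\mu_s}(g_s),
\]
because $\frac{\dd}{\dd s}\mu_s(g_s^2)=\mu_s\big(L g_s^2-2g_sLg_s\big)$ and $\frac{\dd}{\dd s}\mu_s(g_s)=0$. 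Hence
\[
\Var_{\mu_T}(f)\ge \Var_{\mu}(P_Tf)\,e^{2\int_0^T\gamma(\mu_s)\dd s},
\]
and therefore
\[
\big(\nu_T(f)-\mu_T(f)\big)^2\le \chisq{\nu}{\mu}\,\Var_\mu(P_Tf)\le \chisq{\nu}{\mu}\,e^{-2\int_0^T\gamma(\mu_s)\dd s}\,\Var_{\mu_T}(f).
\]
Taking the supremum over $f\in\cA$, and using density of $\cA$ (after justifying density in $L^2(\mu_T)$ or approximating via boundedness), yields the result. This dual route only requires differentiating integrals of functions in $\cA$, which the standing assumptions permit, so it is the version I would actually write. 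The remaining delicate point is the density/approximation step and handling times where the variance vanishes.
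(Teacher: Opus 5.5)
Your proposal is correct and is essentially the paper's proof. The paper integrates the identity $P_tf^2-(P_tf)^2=2\int_0^tP_s\Gamma P_{t-s}f\,\dd s$ against $\mu$, which is the integrated form of your derivative formula for $\Var_{\mu P_s}(P_{T-s}f)$. It then applies Gr\"onwall to get $\Var_\mu(P_Tf)\le e^{-2\int_0^T\gamma(\mu P_s)\dd s}\Var_{\mu P_T}(f)$, extends this by density of $\cA$ in $L^2(\mu P_T)$, and concludes with $f=\dd(\nu P_T)/\dd(\mu P_T)$, which lies in $L^2$ by data processing, together with Cauchy--Schwarz; this is equivalent to your supremum over $f$, and your formal primal computation in Steps 1--3 is a valid heuristic that the paper does not need.
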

	\begin{proof}
		We assume that $\chisq{\nu}{\mu} <\infty$, otherwise there is nothing to prove. By the data processing inequality \cite[Theorem 7.4]{pol-wu-2025}, this implies that for each $t\ge 0$, $\chisq{\nu P_t}{\mu P_t} <\infty$, so  that $ \frac{\dd(\nu P_t)}{\dd(\mu P_t)}$ exists and is in $L^2\tond*{\mu P_t}$. Note that we also have $\mu P_t \ll \pi$: indeed, if $A\in\cE$ satisfies $\pi(A)=0$, then by stationarity we have $P_t 1_A = 0$ $\pi-$a.e., hence  $\mu-$a.e. (as $\mu\ll\pi$), so that
		\begin{eqnarray*}
			(\mu P_t)(A) & = & \int P_t{1_A}\,\dd \mu
			\ = \ 0.
		\end{eqnarray*}		
		Our starting point is the classical integral identity 
		\begin{eqnarray}\label{eq:integral-identity-gamma}
			P_t f^2 -\tond*{P_{t} f}^2 & = & 2 \int_0^t P_s \Gamma P_{t-s} f \dd s,
		\end{eqnarray}
		valid for any $f\in \cA$ and any $t\ge 0$ (see, e.g., \cite{MR3155209}).  We now fix $f\in\cA$ and $0\le t\le T$.  Replacing $f$ by $P_{T-t} f \in \cA$ in  \eqref{eq:integral-identity-gamma} and integrating against $\mu$, we obtain
		\begin{eqnarray*}
			\variwr{\mu P_t}{P_{T-t}f} - \variwr{\mu}{P_T f} & = & \int_{\dX}  P_t \tond*{P_{T-t}f}^2\, \dd \mu - \int_{\dX}  (P_T f)^2\, \dd\mu 
			\\
			& = & 2 \int_0^t \int_{\dX}  P_s \Gamma (P_{T-s}f)\, \dd\mu \,\dd s
			\\
			& =  & 2 \int_0^t \int_{\dX} \Gamma (P_{T-s}f) \dd (\mu P_s) \,\dd s
			\\
			& \ge & 2 \int_0^t \gamma(\mu P_s) \variwr{\mu P_s}{P_{T-s} f}\, \dd s,
		\end{eqnarray*}
		where the last line uses the definition of $\gamma(\mu P_s)$. An application of Gr\"onwall's inequality to the function $t\to \variwr{\mu P_t}{P_{T-t}f}$ leads to the conclusion
		\begin{eqnarray}
			\label{eq:1}
			\variwr{\mu}{P_T f} & \le & \expo{-2\int_0^T \gamma(\mu P_s) \dd s}
			\variwr{\mu P_T}{f}.
		\end{eqnarray}
		Now, we observe that $\cA$ is dense in $L^2(\mu P_T)$: this follows from an approximation argument, which uses the assumptions on the algebra $\cA$
		together with the fact that $\mu P_T \ll \pi$. Thus, \eqref{eq:1} extends to all $f\in L^2(\mu P_T)$. In particular, it applies to the function $f := \relden{(\nu P_T)}{(\mu P_T)}\in L^2(\mu P_T)$. But the latter satisfies the double identity
		\begin{eqnarray}
			\label{eq:2}
			\chisq{\nu P_T}{\mu P_T}  & = & \variwr{\mu P_T}{f} \ = \ \int P_T f \dd\nu - \int P_T f\dd\mu,
		\end{eqnarray}
		while the Cauchy--Schwarz inequality ensures that
		\begin{eqnarray}
			\label{eq:3}
			\tond*{\int P_T f\dd\nu - \int P_T f\dd\mu}^2 & =  & \tond*{\int (F_0 -1) P_T f \dd \mu }^2 
			\ \le \ \chisq{\nu}{\mu}\variwr{\mu}{P_Tf}.
		\end{eqnarray}
		Combining (\ref{eq:1}), (\ref{eq:2}) and (\ref{eq:3}) readily yields the desired inequality. 
	\end{proof}
	
	We will also need the following lemma, which is borrowed from \cite{pedrotti2025transportapproachcutoffphenomenon}.
	\begin{lemma}[Interpolation]\label{lm:interp} Fix two measures $\mu,\pi\in\cP(\dX)$ with $\dtv(\mu,\pi)<1$. Then, there is a   measure $\nu\in\cP(\dX)$ which is absolutely continuous w.r.t. both $\mu$ and $\pi$, such that
		\begin{eqnarray*}
			\left\|\frac{\dd \nu}{\dd \mu}\right\|_\infty\vee\ \ \left\|\frac{\dd\nu}{\dd\pi}\right\|_\infty	 & \leq & \frac{1}{1-\dtv(\mu,\pi)}.
		\end{eqnarray*}
	\end{lemma}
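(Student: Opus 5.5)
The plan is to take $\nu$ to be the normalized \emph{overlap} (minimum) of $\mu$ and $\pi$. Its total mass is exactly $1-\dtv(\mu,\pi)$, and it is dominated by both $\mu$ and $\pi$, which gives both density bounds at once.

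First I fix a common dominating measure, say $\rho:=\frac12(\mu+\pi)$, and write $p:=\frac{\dd\mu}{\dd\rho}$ and $q:=\frac{\dd\pi}{\dd\rho}$. I then define the sub-probability measure $m:=(p\wedge q)\,\rho$ and record the standard identity
\begin{eqnarray*}
m(\dX) \ = \ \int p\wedge q\,\dd\rho \ = \ 1-\int (p-q)_+\,\dd\rho \ = \ 1-\dtv(\mu,\pi).
\end{eqnarray*}
This uses $p\wedge q=p-(p-q)_+$ together with the fact that the supremum defining $\dtv$ is attained at $A=\{p>q\}$, so that $\dtv(\mu,\pi)=\int(p-q)_+\,\dd\rho$. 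Since $\dtv(\mu,\pi)<1$ by hypothesis, $m(\dX)>0$, and I may set
\begin{eqnarray*}
\nu \ := \ \frac{m}{1-\dtv(\mu,\pi)}\in\cP(\dX).
\end{eqnarray*}

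Next I check the density bounds. For every $A\in\cE$ we have $m(A)\le\int_A p\,\dd\rho=\mu(A)$ and likewise $m(A)\le\pi(A)$. Hence
\begin{eqnarray*}
\nu \ \le \ \frac{\mu}{1-\dtv(\mu,\pi)}, \qquad \nu \ \le \ \frac{\pi}{1-\dtv(\mu,\pi)},
\end{eqnarray*}
as measures. This gives $\nu\ll\mu$ and $\nu\ll\pi$. Moreover the Radon--Nikodym derivatives are bounded by $1/(1-\dtv(\mu,\pi))$ almost everywhere, since a nonnegative measure dominated by $c\mu$ has density at most $c$ w.r.t. $\mu$. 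Explicitly,
\begin{eqnarray*}
\frac{\dd\nu}{\dd\mu} \ = \ \frac{(p\wedge q)/p}{1-\dtv(\mu,\pi)} \quad\text{on } \{p>0\},
\end{eqnarray*}
and symmetrically for $\pi$.

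There is no real obstacle here. The only points that need care are the identity $\int p\wedge q\,\dd\rho=1-\dtv$ and the remark that we need strict inequality $\dtv(\mu,\pi)<1$ so that the normalization is legitimate.
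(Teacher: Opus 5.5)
Your proof is correct and is essentially the paper's argument. The paper also takes $\nu$ to be the normalized overlap $\frac{f\wedge g}{1-\dtv(\mu,\pi)}\,\dd\sigma$, with $\sigma=\mu+\pi$ in place of your $\frac12(\mu+\pi)$, which changes nothing. It likewise reads off both density bounds from $\nu(A)\le \frac{\mu(A)\wedge\pi(A)}{1-\dtv(\mu,\pi)}$, and you additionally justify the identity $\int p\wedge q\,\dd\rho=1-\dtv(\mu,\pi)$ via the optimal set $\{p>q\}$, which the paper takes for granted.
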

	\begin{proof}
		Fix a reference measure $\sigma$ with respect to which both  $\mu$ and $\pi$ are absolutely continuous (e.g., $\sigma:=\mu+\pi$), and let $f:=\frac{\dd\mu}{\dd\sigma}$ and $g:=\frac{\dd\pi}{\dd\sigma}$ denote the corresponding Radon-Nikodym derivatives. With this notation at hand, we  have the integral representation
		\begin{eqnarray*}
			\dtv(\mu,\pi) & = & 1-\int {f \wedge g}\, \dd\sigma.
		\end{eqnarray*}
		Consequently, we can define a probability measure $\nu$ by the formula
		\begin{eqnarray*}
			\dd \nu & := & \frac{f\wedge g}{1-\dtv(\mu,\pi)}\,\dd{\sigma}.
		\end{eqnarray*}
		This measure is absolutely continuous w.r.t. both $\mu$ and $\pi$, because $\nu(A)\le \frac{\mu(A)\wedge \pi(A)}{1-\dtv({\mu},{\pi})},$ for every $A\in\cE$. Moreover, the corresponding Radon-Nikodym derivatives are
		\begin{eqnarray*}
			\frac{\dd \nu}{\dd \mu} \ = \ \frac{1\wedge \frac g f}{1-\dtv(\mu,\pi)},& \textrm{ and } & 
			\frac{\dd\nu}{\dd \pi} \ = \ \frac{1\wedge \frac f g}{1-\dtv(\mu,\pi)},
		\end{eqnarray*}
		those formulae being interpreted as zero outside  $\supp(\nu):=\{f\wedge g>0\}$. 
	\end{proof}
	We now have all we need to prove our main result.
	\begin{proof}[Proof of Theorem \ref{th:main}]
		Fix $\varepsilon\in(0,1/2)$ and set $t_0:=\tmix(1-\varepsilon)+h$ for a small $h>0$.
		By Lemma \ref{lm:interp}, there exists $\nu\in\cP(\dX)$ whose densities w.r.t. $\pi$ and $\mu_{t_0}$ are both bounded by $1/\varepsilon$. In particular, 
		\begin{eqnarray*}
			\chisq{\nu}{\pi} \vee  \chisq{\nu}{\mu_{t_0}} & \le & \frac{1}{{\varepsilon}}.
		\end{eqnarray*}
		In view of  Lemma \ref{lm:chi} and our definition of $\gamma_\star$, we deduce that  for all $t\ge 0$,
		\begin{eqnarray*}
			\chisq{\nu P_t}{\pi} \vee \chisq{\nu P_t}{\mu_{t_0} P_t}  & \le & \frac{ e^{-2\gamma_\star t}}{{\varepsilon}}.
		\end{eqnarray*}
		Using  the triangle inequality and (\ref{CS}), we arrive at
		\begin{eqnarray*}
			\dtv\tond*{\mu_0 P_{t_0+t},\pi} & = & \dtv(\mu_{t_0} P_t,\pi) \\
			& \le &  \dtv(\nu P_t,\pi) + \dtv(\nu P_t,\mu_{t_0} P_t)\\
			& \le & \frac{1}{2}
			\tond*{\sqrt{\chisq{\nu P_t}{\pi}}+\sqrt{\chisq{\nu P_t}{\mu_{t_0} P_t}} }
			\\
			& \le &  \frac{ e^{-\gamma_\star t}}{\sqrt{\varepsilon}}.
		\end{eqnarray*}
		Finally, choosing $t:=\frac{3}{2\gamma_\star}\log \frac{1}{\varepsilon}$ ensures that $\dtv\tond*{\mu_0 P_{t_0+t},\pi}\le\varepsilon$, hence $\tmix(\varepsilon)\le t_0+t$. Recalling that $t_0=\tmix(1-\varepsilon)+h$, and that $h>0$ was arbitrary, this implies the claimed inequality (\ref{eq:window}) by letting $h\to 0$.
	\end{proof}
	Finally, let us show how the sub-commutation property (\ref{def:psi}) leads to an estimate on $\gamma_\star$.  
	\begin{proof}[Proof of Proposition \ref{pr:psi}]
		Fix $f\in \cA$ and $t>0$.
		
		By \eqref{eq:integral-identity-gamma}, the definition of $\psi(s)$ and the semigroup property, we have that
		\begin{eqnarray*}
			(P_t f^2)-(P_tf)^2 & = & 2 \int_0^t P_s\Gamma P_{t-s}f\,\dd s.
			\\
			& \le & \tond*{2 \int_0^t \psi(s)  \,\dd s} P_t \Gamma f.
		\end{eqnarray*}
		If $\mu_0\ll \pi$, then  we can  safely integrate this with respect to $\mu_0$ to deduce that 
		\begin{eqnarray} \label{eq:expectation-variance}
			\int \tond*{P_t f^2 - (P_t f)^2} \,\dd\mu_0   & \le & \tond*{2 \int_0^t \psi(s)  \,\dd s} \int \Gamma f \, \dd \mu_t.
		\end{eqnarray}
		By  the law of total variance, we then obtain
		\begin{eqnarray*}
			\variwr{\mu_t}{f} & = & \int \tond*{P_t f^2 - (P_t f)^2} \,\dd\mu_0 + \variwr{\mu_0}{P_t f}
			\\
			& \le & 
			\tond*{2 \int_0^t \psi(s)  \,\dd s} \int \Gamma f \, \dd \mu_t + \frac1{\gamma(\mu_0)}\int \Gamma(P_t f) \, \dd\mu_0.
			\\
			& \le &  \tond*{2 \int_0^t \psi(s)  \,\dd s} \int \Gamma f \, \dd \mu_t + \frac{\psi(t)}{\gamma(\mu_0)} \int \Gamma(f) \, \dd\mu_t,
		\end{eqnarray*}
		where we have used the definition of $\gamma(\mu_0)$ and  $\psi(t)$.
		This proves the first claim. Now, let us assume \eqref{eq:ass-det-init} instead of $\mu_0 \ll \pi$. For $0<s<t$, we can then write
		\begin{eqnarray*}
			\variwr{\mu_t}{f} 
			& = &  \variwr{\mu_s}{P_{t-s}f} +\int \tond*{P_{t-s}f^2-(P_{t-s}f)^2} \,\dd\mu_s\\
			& \le &   2\variwr{\mu_s}{P_{t}f}+2\variwr{\mu_s}{P_{t-s}f-P_t f}+\int \tond*{P_{t-s}f^2-(P_{t-s}f)^2} \,\dd\mu_s\\
			& \le &   2\variwr{\mu_s}{P_{t}f}+2\int{\left(f-P_sf\right)^2}\dd \mu_t+\int \tond*{P_{t-s}f^2-(P_{t-s}f)^2} \,\dd\mu_s,
		\end{eqnarray*}
		where we have used the law of total variance, the semigroup property, and the Cauchy-Schwarz inequality. 
		By assumption, the first term goes to $0$ as $s\to 0$. 
		The second one tends to $0$ since $P_s f\to f \in L^2(\pi)$ as $s\to 0$, $f\in L^\infty(\pi)$ and $\mu_t \ll \pi$.
		For the third term, recalling that $\mu_s\ll \pi$, we have as in \eqref{eq:expectation-variance} that
		\begin{eqnarray*}
			\int \tond*{P_{t-s}f^2-(P_{t-s}f)^2} \,\dd\mu_s & \le  & \tond*{2 \int_0^{t-s} \psi(r)  \,\dd r} \int_{\dX} \Gamma f \, \dd \mu_{t}.
			\\
			& \le &  \tond*{2 \int_0^{t} \psi(r)  \,\dd r} \int_{\dX} \Gamma f \, \dd \mu_{t}
		\end{eqnarray*}
		and the second claim follows immediately.

	\end{proof}

	\section*{Acknowledgment}
	This work is supported by the ERC consolidator grant CUTOFF (101123174).

	\bibliographystyle{plain}
	\bibliography{references}

\begin{thebibliography}{10}

\bibitem{aldous1983mixing}
David Aldous.
\newblock Random walks on finite groups and rapidly mixing {M}arkov chains.
\newblock In {\em Seminar on probability, {XVII}}, volume 986 of {\em Lecture Notes in Math.}, pages 243--297. Springer, Berlin, 1983.

\bibitem{aldous1986shuffling}
David Aldous and Persi Diaconis.
\newblock {Shuffling cards and stopping times}.
\newblock {\em American Mathematical Monthly}, pages 333--348, 1986.

\bibitem{MR889476}
D.~Bakry and Michel \'{E}mery.
\newblock Diffusions hypercontractives.
\newblock In {\em S\'{e}minaire de probabilit\'{e}s, {XIX}, 1983/84}, volume 1123 of {\em Lecture Notes in Math.}, pages 177--206. Springer, Berlin, 1985.

\bibitem{MR3155209}
Dominique Bakry, Ivan Gentil, and Michel Ledoux.
\newblock {\em Analysis and geometry of {M}arkov diffusion operators}, volume 348 of {\em Grundlehren der Mathematischen Wissenschaften [Fundamental Principles of Mathematical Sciences]}.
\newblock Springer, Cham, 2014.

\bibitem{MR3650406}
Riddhipratim Basu, Jonathan Hermon, and Yuval Peres.
\newblock Characterization of cutoff for reversible {M}arkov chains.
\newblock {\em Ann. Probab.}, 45(3):1448--1487, 2017.

\bibitem{MR3185193}
St\'ephane Boucheron, G\'abor Lugosi, and Pascal Massart.
\newblock {\em Concentration inequalities}.
\newblock Oxford University Press, Oxford, 2013.
\newblock A nonasymptotic theory of independence, With a foreword by Michel Ledoux.

\bibitem{che-2026-book}
Sinho Chewi.
\newblock {\em Log-concave sampling}.
\newblock Forthcoming, 2026.
\newblock Available online at \url{https://chewisinho.github.io/}.

\bibitem{diaconis1996cutoff}
Persi Diaconis.
\newblock The cutoff phenomenon in finite {M}arkov chains.
\newblock {\em Proc. Nat. Acad. Sci. U.S.A.}, 93(4):1659--1664, 1996.

\bibitem{diaconis1981generating}
Persi Diaconis and Mehrdad Shahshahani.
\newblock {Generating a random permutation with random transpositions}.
\newblock {\em Probability Theory and Related Fields}, 57(2):159--179, 1981.

\bibitem{ding2010total}
Jian Ding, Eyal Lubetzky, and Yuval Peres.
\newblock Total variation cutoff in birth-and-death chains.
\newblock {\em Probab. Theory Related Fields}, 146(1-2):61--85, 2010.

\bibitem{MR3492631}
Bo'az Klartag, Gady Kozma, Peter Ralli, and Prasad Tetali.
\newblock Discrete curvature and abelian groups.
\newblock {\em Canad. J. Math.}, 68(3):655--674, 2016.

\bibitem{MR2774096}
Eyal Lubetzky and Allan Sly.
\newblock Explicit expanders with cutoff phenomena.
\newblock {\em Electron. J. Probab.}, 16:no. 15, 419--435, 2011.

\bibitem{MR4871708}
Francesco Pedrotti.
\newblock Contractive coupling rates and curvature lower bounds for {M}arkov chains.
\newblock {\em Ann. Appl. Probab.}, 35(1):196--250, 2025.

\bibitem{ped-sal-2025}
Francesco Pedrotti and Justin Salez.
\newblock A new cutoff criterion for non-negatively curved chains, 2025.

\bibitem{pedrotti2025transportapproachcutoffphenomenon}
Francesco Pedrotti and Justin Salez.
\newblock A transport approach to the cutoff phenomenon, 2025.

\bibitem{pol-wu-2025}
Yury Polyanskiy and Yihong Wu.
\newblock {\em Information theory. {From} coding to learning}.
\newblock Cambridge: Cambridge University Press, 2025.

\bibitem{MR4546624}
Justin Salez.
\newblock Universality of cutoff for exclusion with reservoirs.
\newblock {\em Ann. Probab.}, 51(2):478--494, 2023.

\bibitem{MR4780485}
Justin Salez.
\newblock Cutoff for non-negatively curved {M}arkov chains.
\newblock {\em J. Eur. Math. Soc. (JEMS)}, 26(11):4375--4392, 2024.

\bibitem{salez2025diffusions}
Justin Salez.
\newblock Cutoff for non-negatively curved diffusions.
\newblock {\em Inventiones Mathematicae}, 2025.

\bibitem{salez2025lecture}
Justin Salez.
\newblock Modern aspects of markov chains: entropy, curvature and the cutoff phenomenon, 2025.

\end{thebibliography}

\end{document}